 \newcommand\ForAuthors[1]
\newcommand{\comment}[1]{}
\newcommand{\rr}{\mathbb R}
\newcommand{\rd}{\rr^d}
\newcommand{\rdm}{\rr^{d+m}}
\newcommand{\Id}{\operatorname{Id}}
\newcommand{\rea}{\mathcal{R}}
\newcommand{\inset}{\mathcal{S}}
\newcommand{\bdu}{\mathcal{U}}
\newcommand{\state}{\mathcal{X}}
\def\norm#1{\mbox{$\| #1 \|$}}
\def\nn{\mathbb N}
\def\xu{\begin{pmatrix} x\\ u\end{pmatrix}} 
\def\uxu{\begin{pmatrix} 1\\ x \\ u\end{pmatrix}}
\title{Automatic Synthesis of Piecewise Linear Quadratic Invariants for Programs%
\thanks{This work has been partially supported by an RTRA/STAE BRIEFCASE project
  grant, the ANR projects INS-2012-007 CAFEIN, and ASTRID VORACE.}}
\titlerunning{Piecewise Quadratic Invariant}  
\author{Assalé Adjé
 \and Pierre-Loïc Garoche
 }
\authorrunning{Adjé, Garoche} 
\institute{Onera, the French Aerospace Lab, France\\Université de Toulouse,
  Toulouse, France\\ \email{firstname.lastname@onera.fr}}
\begin{document}
\mainmatter

\maketitle

\begin{abstract}
Among the various critical systems that worth to be formally analyzed, a wide
set consists of controllers for dynamical systems. Those programs typically
execute an infinite loop in which simple computations update internal states
and produce commands to update the system state.
Those systems are yet hardly analyzable by available static analysis
method, since, even if performing mainly linear computations, the computation of a safe
set of reachable states often requires quadratic invariants.

In this paper we consider the general setting of a piecewise affine program;
that is a program performing different affine updates on the system depending on
some conditions. This typically encompasses linear controllers with
saturations or controllers with different behaviors and performances activated on
some safety conditions.

Our analysis is inspired by works performed a decade ago by Johansson et al, and
Morari et al, in the control community. We adapted their method focused
on the analysis of stability in continuous-time or discrete-time settings to fit the static analysis 
paradigm and the computation of invariants, that is over-approximation of reachable sets
using piecewise quadratic Lyapunov functions.

\keywords{formal verification, static analysis, piecewise affine systems,
  piecewise quadratic lyapunov functions}

\end{abstract}


\section{Introduction}

With the success of Astrée~\cite{DBLP:journals/sigsoft/BertraneCCFMMR11}, static
analysis in general and abstract interpretation in particular are now seriously
considered by industrials from the critical embedded system community, and more
specifically by the engineers developping and validation controllers. The
certification norms concerning the V\&V of those software have also evolved and
now enable the use of such methods in the developement process.

These controller software are meant to perform an infinite loop in which values
of sensors are read, a function of inputs and internal states is computed, and
the value of the result is sent to actuators.  In general, in the most critical
applications, the controllers used are based on a simple linear update with
minor non linearities such as saturations, i.e. enforcing bounds, or specific
behaviors when some conditions are met. The controlled systems range from
aircraft flight commands, guidance algorithms, engine control from any kind of
device optimizing performance or fuel efficiency, control of railway
infrastructure, fan control in tunnels, etc.

It is therefore of outmost importance to provide suitable analyses to verify
these controllers. One of the approach is to rely on quadratic invariants, such
as the digital filters abstract domain of Feret~\cite{DBLP:conf/esop/Feret04}, since, according
to Lyapunov theorem, any globally asymptotically stable linear system admits a
quadratic Lyapunov function. This theorem does not hold in presence of
disjunction, such as saturations.

In static analysis, dealing with disjunction is an import concern. When the join
of two abstract element is imprecise, one can consider the disjunctive
completion of the domain~\cite{DBLP:conf/slp/FileR94}. This process enriches the
set of abstract elements with new ones, but the cost, i.e. the number of new
elements, could be exponential in the number of initial elements.  Concerning
relation abstract domains, one should mention the tropical polyhedra of
Allamigeon~\cite{AllamigeonThesis} in which an abstract element characterizes a finite disjunction of
zones~\cite{DBLP:conf/pado/Mine01}. However concerning quadratic properties, no static
analysis actually performs the automatic computation of disjunctive quadratic
invariants.

The goal of this paper is to propose such a computation: produce a disjunctive
quadratic invariant as a sub-level of a piecewise quadratic Lyapunov function.

\paragraph{Related works}

Most relational abstractions used in the static analysis community rely on a
linear representation of relationship between variables,
e.g. polyhedra~\cite{DBLP:conf/popl/CousotH78},
octagons~\cite{DBLP:journals/lisp/Mine06},
zonotopes~\cite{DBLP:conf/cav/GhorbalGP09} are not join-complete. As
mentionned above, the tropical polyhedra domain~\cite{AllamigeonThesis} admits some
disjunctions since it characterizes a family of properties encoded as finite
disjunction of zones.

Concerning non linear properties, the need for quadratic invariant was addressed
a decade ago with ellipsoidal abstract domains for simple linear
filters~\cite{DBLP:conf/esop/Feret04} and more recently for non linear template
domains~\cite{DBLP:conf/esop/ColonS11} and policy iteration based static
analysis~\cite{DBLP:journals/jsc/GawlitzaSAGG12}.

More recently, techniques used in the control community have been used to
synthesize appropriate quadratic templates using SDP solvers and Lyapunov
functions~\cite{DBLP:conf/hybrid/RouxJGF12}.

The proposed technique addresses a family of systems well beyond the ones handled
by the mentionned methods. In general, a global quadratic invariant is not
enough to bound the reachable value of the considered systems, hence none of
these could succeed.
  
On the control community side, Lyapunov based analysis are typically used to
show the good behavior of a controlled system: it is globally asymptotically
stable (GAS), i.e. when time goes to infinity the trajectories of the system goes to
$0$. Since about a decade SDP solvers, i.e. convex optimzation algorithms for
semi-definite programming, have reached a level of maturity that enable their use to
compute quadratic Lyapunov functions. On the theory side, variants of quadratic Lyapunov 
functions such as the papers motivating our work --
Johansson and Rantzer~\cite{847100,1205102} as well as Mignone, Ferrari-Trecate and
Morari~\cite{912814} -- addressed the study of piecewise linear systems for proving
the GAS property.

In general, computing a safe superset of reachable states as needed when
performing static analysis, is not a common question for control theorist. They
would rather address the related notions of controllabilty or stability under
pertubations. In most case, either the property considered, or the technique used, 
relies on the existence of a such a bound over reachable state; which we aim to compute
in static analysis.

\paragraph{Contributions}

Our contribution is threefold and based on the method of Johansson and Mignone used to prove the GAS property
of a piecewise linear system:
\begin{itemize}
\item we detailed the method in the discrete setting, computing a piecewise
  quadratic Lyapunov function of a \textit{discrete-time system};
\item we adapted it to compute an invariant over reachables states of the analyzed system;
\item we showed the applicability of the proposed method to a wide set of generated examples.
\end{itemize}

\paragraph{Organisation of the paper}

The paper is structured as follow. Section~\ref{sec:statement} introduces the
kind of programs considered. Section~\ref{sec:contrib} details our version of
the piecewise quadratic Lyapunov function as well as the characterization of
invariant sets. Section~\ref{sec:exp} presents the experimentations while
Section~\ref{sec:conclu} concludes and opens future direction of research.


\section{Problem statement} 
\label{sec:statement}

In this paper, we are interested in analyzing computer science programs. The programs we consider 
here are composed of a single loop with possibly a complicated switch-case type loop body. Our switch-case 
loop body is supposed to be written as a nested sequence of \emph{ite}
statements, or a \emph{switch $c1\to inst1; c2\to instr2; c3\to instr3$}. 
Moreover, we suppose that the analyzed programs are written in affine arithmetics. Consequently, the 
programs analyzed here can be interpreted as piecewise affine discrete-time systems.     
Finally, we reduce the problem to compute automatically an overapproximation of the reachable states of 
a piecewise affine discrete-time system. The term piecewise affine means that there exists 
a polyhedral partition $\{X^i,i\in I\}$ of the state-input space $\state\subseteq \rdm$ such that for all $i\in I$, 
the dynamic of the system is affine and represented by the following relation :
\begin{equation}
\label{pwa}
x_{k+1}=A^i x_k+B^i u_k+b^i, i\in I, k\in\nn
\end{equation}
where $A^i$ is a $d\times d$ matrix, $B^i$ a $d\times m$ matrix and $b^i$ a vector of $\rd$.
The variable $x\in\rd$ refers to the state variable and $u\in\rr^m$ refers to some input variable.

For us, a polyhedral partition is a family of convex polyhedra which partitions the state-input space 
i.e. $\state=\bigcup_{i\in I}X^i\subseteq \rdm$ such that $X^i\cap X^j=\emptyset$ for all $i,j\in I$, $i\neq j$. 
From now on, we call $X^i$ \emph{cells}. Cells $\{X^i\}_{i\in I}$ are convex polyhedra which can contain 
both strict and weak inequalities. Cells can be represented by a $n_i\times (d+m)$ matrix $T^i$ 
and $c^i$ a vector of $\rr^{n_i}$. We denote by $\mathbb{I}_i^s$ the set of indices which represent strict 
inequalities for the cell $X^i$, denote by $T_s^i$ and $c_s^i$ the parts of $T^i$ and $c^i$ 
corresponding to strict inequalities and by $T_w^i$ and $c_w^i$ the one corresponding to weak inequalities. 
Finally, we have the matrix representation given by Formula~\eqref{polyhedra}.  
\begin{equation}
\label{polyhedra}
X^i=\left\{\xu\in\rdm \left| T_s^i\xu \ll c_s^i,\ T_w^i\xu \leq c_w^i\right\}\right.
\end{equation}
We insist on the notation: $y\ll z$ means that for all coordinates $l$, $y_l<z_l$ and $y\leq z$ means that for all 
coordinates $l$, $y_l\leq z_l$.     

We will need homogeneous versions of laws and thus introduct the $(1+d+m)\times (1+d+m)$ 
matrices $F^i$ defined as follows:
\begin{equation}
\label{homogeneouslaw}
F^i=\begin{pmatrix} 1 & 0_{1\times d} & 0_{1\times m} \\
                    b^i & A^i & B^i\\ 
                    0 & 0_{m\times d} & \Id_{m\times m}
    \end{pmatrix}
\end{equation}
The system defined in Equation~\eqref{pwa} can be rewritten as 
$(1,x_{k+1},u_{k+1})^\intercal =F^i(1,x_{k+1},u_k)$.
Note that we introduce a "virtual" dynamic law $u_{k+1}=u_k$ on the input variable in Equation~\eqref{homogeneouslaw}. 
In the point of view of set invariance computation, we will see that it remains to consider 
such dynamic law. Indeed we suppose that the input is bounded and we write 
$u_k\in \bdu$ for all $k\in\nn$ with $\bdu$ is a nonempty compact set (polytope). 


We are interested in proving that the reachable states $\rea$ is bounded and a proof of this statement can be 
expressed by directly computing $\rea$ that is:
\[
\rea=\{ y\in\rd\mid \exists\ k\in \nn, \exists\ i\in I,\ \exists\, u_k\in \bdu,\ y=A^i x_k+B^i u_k+b^i\}\cup \{x_0\}
\]
and prove that this set is bounded. We can also compute an overapproximation of $\rea$ from a set 
$\inset\subseteq \rdm$ such that $(x_0,u_0)\in \inset$, $\rea\times\bdu\subseteq \inset$ and $\inset$ is an inductive 
invariant in the sense of:
\[
(x,u)\in \inset \implies (A^i x+B^i u+b^i,u)\in \inset,\ \forall\, i\in I\enspace .
\]
Indeed, by induction since $(x_0,u_0)$ belongs to $\inset$, $(x_k,u_k)\in\inset$ for all $k\in\nn$. Since 
every image of the dynamic of the system stays in $\inset$, a reachable state $(y,u)$ belongs to 
$\inset$. Finally, if we prove that $\inset$ is bounded then $\rea$ is also bounded.

Working directly on sets can be difficult and usually invariant sets are computed as a sublevel 
of some function to find. For (convergent) discrete-time linear systems, it is classical to compute 
ellipsoidal overapproximation of reachable states. Indeed, sublevel sets of Lyapunov functions are 
invariant set for the analyzed linear system and to compute an ellipsoid containing the initial 
states provides an overapproximation of reachable states. 
Initially, Lyapunov functions are used to prove quadratic asymptotic stability. In this paper, we use 
an analogue of Lyapunov functions for piecewise affine systems to compute directly an overapproximation 
of reachable states.

  \allowdisplaybreaks
\begin{example}[Running example]
Let us consider the following program. It is constituted by a single while loop with 
two nested conditional branches in the loop body. 

\begin{lstlisting}[mathescape=true,frame=single]
(x,y)$\in [-9,9]\times [-9,9]$;
while(true)
   ox=x;
   oy=y;
   read(u); \\$u\in[-3,3]$
   if (-9*ox+7*y+6*u<5){
      if(-4*ox+8*oy-8*u<4){
         x=0.4217*ox+0.1077*oy+0.5661*u;
         y=0.1162*ox+0.2785*oy+0.2235*u-1;
         }
      else { \\4*ox-8*oy+8*u<-4
         x=0.4763*ox+0.0145*oy+0.9033*u;
         y=0.1315*ox+0.3291*oy+0.1459*u+9;
         }
      } 
   else { \\9*ox-7*y-6*u<-5
      if(-4*ox+8*oy-8*u<4){
         x=0.2618*ox+0.1107*oy+0.0868*u-4;
         y=0.4014*ox+0.4161*oy+0.6320*u+4;
         }
      else { \\4*ox-8*oy+8*u<-4
         x=0.3874*ox+0.00771*oy+0.5153*u+10;
         y=0.2430*ox+0.4028*oy+0.4790*u+7;      
         }
      }
\end{lstlisting}

The initial condition of the piecewise affine systems is $(x,y)\in[-9,9]\times [-9,9]$
and the polytope where the input variable $u$ lives is $\mathcal{U}=[-3,3]$.

We can rewrite this program as a piecewise affine discrete-time dynamical systems using
our notations. We give details on the matrices $T_s^i$ and $T_w^i$ and vectors $c_s^i$ and $c_w^i$ 
(see Equation~\eqref{polyhedra}) which characterize the cells and on the matrices $F^i$  
representing the homogeneous version (see Equation~\eqref{homogeneouslaw}) of affine laws 
in the cell $X^i$. 
{\footnotesize
\begin{align*}
F^1= \begin{pmatrix}
  1 & &0  & & 0 & & 0\\
  0  & & 0.4217 & &  0.1077 & & 0.5661 \\
  -1 & & 0.1162 & &  0.2785 & & 0.2235 \\
  0 & & 0 & &  0 & &  1
\end{pmatrix}, &
\left\{
\begin{array}{l}
T_s^1=\begin{pmatrix} -9 & & 7 & & 6\\ -4 & & 8 & & -8\end{pmatrix}\\
\\
c_s^1=(5\ 4)^\intercal
\end{array}
\right.
,& 
\left\{
\begin{array}{l}
T_w^1=\begin{pmatrix}0 & & 0 & & 1\\ 0 & & 0 & & -1\end{pmatrix}\\ 
\\
c_w^1=(3\ 3)^\intercal
\end{array}
\right.
\\
\\
F^2= \begin{pmatrix}
1 & &0  & & 0 & & 0\\
0 & & 0.4763 & & 0.0145 & &  0.9033 \\
9 & &  0.1315 & & 0.3291 & &  0.1459 \\
0 & & 0 & & 0 & & 1 \\
\end{pmatrix}, &
\left\{
\begin{array}{l}
T_s^2=\begin{pmatrix} -9 & & 7 & & 6\end{pmatrix}\\
\\
c_s^2=5 
\end{array}\right. ,
& 
\left\{
\begin{array}{l}
T_w^2=\begin{pmatrix} 4 & & -8 & & 8\\
0 & & 0 & & 1\\ 0 & & 0 & & -1\end{pmatrix}\\
\\
c_w^2=(-4\ 3\ 3)^\intercal\end{array}\right.\\
\\
F^3= \begin{pmatrix}
  1 & &0  & & 0 & & 0\\
  -4  & & 0.2618 & &  0.1177 & & 0.0868 \\
  4 & & 0.4014 & &  0.4161 & & 0.6320 \\
  0 & & 0 & &  0 & &  1
\end{pmatrix}, & 
\left\{
\begin{array}{l}
T_s^3=\begin{pmatrix} -4 & & 8 & & -8\end{pmatrix}\\
\\
c_s^3=4\end{array}\right. ,
& 
\left\{
\begin{array}{l}
T_w^3=\begin{pmatrix}9 && -7 && -6 \\0 & & 0 & & 1\\ 0 & & 0 & & -1\end{pmatrix}\\
\\
c_w^2=(-5\ 3\ 3)^\intercal
\end{array}\right.\\
\\
F^4= \begin{pmatrix}
  1 & &0  & & 0 & & 0\\
  10  & & 0.3874 & &  0.0771 & & 0.5153 \\
  7 & & 0.2430 & &  0.4028 & & 0.4790 \\
  0 & & 0 & &  0 & &  1
\end{pmatrix}, & 
\left\{
\begin{array}{l}
T_w^4=\begin{pmatrix}9 & & -7 & & -6\\ 4 & & -8 & & 8 \\
0 & & 0 & & 1\\ 0 & & 0 & & -1\end{pmatrix}\\
\\
c_w^4=(-5\ -4\ 3\ 3)^\intercal
\end{array}\right.
\end{align*}
}
\end{example}



\section{Invariant computation}
\label{sec:contrib}
In~\cite{1205102,912814}, the authors propose a method to prove stability of piecewise affine 
dynamical discrete-time systems. The method is a generalisation of Lyapunov stability equations 
in the case where affine laws defining the system depend on the current state. Let $A$ be a $d\times d$
matrix and let $x_{k+1}=Ax_k,\ k\in\nn,\ x_0\in\rd$ be a linear dynamical system. We recall that $L$ is a quadratic 
Lyapunov function iff there exists a $d\times d$ symmetric matrix $P$ such that $L(x)=x^\intercal P x$ for all 
$x\in\rd$ and $P\succ 0\text{ and } P-A^\intercal P A\succ 0$.
The notation $P \succ 0$ means that $P$ is positive definite i.e. $x^\intercal P x>0$ for all $x\in\rd,\ x\neq 0$ and 0 for $x=0$. 
We will denote by $Q \succeq 0$ when $Q$ is positive semidefinite i.e. $x^\intercal P x\geq 0$ for all $x\in\rd$. 
Positive definite matrices characterize square of norm on $\rd$. A Lyapunov function allows to prove the stability 
by the latter fact : the norm (associated to the Lyapunov function) of the states $x_k$ decreases along the time.
In switched system, similarly to the classical case, we exhibited a positive matrix (square norm) to prove that
the trajectories decrease along the time. The main difficulty in the switched case is the fact that we change the laws 
and we must decrease whenever a transition from one cell to other is fired. Moreover, we only require the norm to be 
local i.e. positive only where the law is used. 
\subsection{Quadratization of cells}
\label{quadratizationsub}
We recall that for us local means that true on a cell and thus true on a polyhedron. Using the homogenous 
version of a cell, we can define local positiveness on a polyhedral cone. Let $Q$ be a $d\times d$ 
symmetric matrix and $M$ be a $n\times d$ matrix. Local positivity in our case means that 
$My\geq 0\implies y^\intercal Q y\geq 0$. The problem will be to write the local positivity as a constraint 
without implication. The problem is not new (e.g. the survey paper~\cite{SurveyKpositivity}). The paper~\cite{Martin1981227} 
proves that local positivity is equivalent, when $M$ has a full row rank, to $Q-M^\intercal C M\succeq 0$ where $C$ is 
a copositive matrix i.e. $x^\intercal C x\geq 0$ if $x\geq 0$. First in general (when the rank of $M$ is not necessarily  
equal to its number of rows), note that if $Q-M^\intercal C M\succeq 0$ for some copositive matrix $C$ then $Q$ satisfies 
$My\geq 0\implies y^\intercal Q y\geq 0$. Secondly every matrix $C$ with nonnegative entries is copositive. Since 
copositivity seems to be as difficult as local positivity to handle, we will restrict copositive matrices to 
be matrices which nonnegative entries.  
The idea is instead of using cells as polyhedral cones, we use a quadratization of cells by 
introducing nonnegative entries and we will define the quadratization of a cell $X^i$ by:
\begin{equation}
\label{quadratization}
\overline{X^i}=\left\{ \xu\in\rdm\left| \uxu^\intercal {E^i}^\intercal W^i E^i\uxu\geq 0\right\}\right.
\end{equation}
where $W^i$ is a $(1+n_i)\times (1+n_i)$ symmetric matrix with nonnegative entries and $E^i=\begin{pmatrix} E_s^i\\ E_w^i\end{pmatrix}$ 
with $E_s^i=\begin{pmatrix} 1 & & 0_{1\times (d+m)}\\ c_s^i & & -T_s^i\end{pmatrix}$ and 
$E_w^i=\begin{pmatrix} c_w^i & &-T_w^i\end{pmatrix}$. Recall that $n_i$ is the number 
of rows of $T^i$. The matrix $E^i$ is thus of the size $n_i+1\times (1+d+m)$. 
The goal of adding the row $(1,0_{1\times (d+m)})$ is to avoid to add the opposite of a vector of $X^i$ 
in $\overline{X^i}$. Indeed without this latter vector $\overline{X^i}$ would be symmetric. We illustrate 
this fact at Example~\ref{reason_of_one}. Note that during optimisation process, matrices $W^i$ will be decision variables. 
\begin{example}[The reason of adding the row $(1,0_{1\times (d+m)})$]
\label{reason_of_one}
Let us take the polyedra $X=\{x\in\rr\mid x\leq 1\}$. Using our notations, 
we have $X=\{x\mid M(1\ x)^\intercal \geq 0\}$ with $M=(1\ -1)$. 
Let us consider two cases, the first one without adding the row 
and the second one using it.

Without any modification, the quadratization of $X$ relative to a nonnegative real $W$ is 
$X'=\{x\mid (1\ x)M^\intercal W M (1\ x)^\intercal \geq 0\}$.
But $(1\ x)M^\intercal W M (1\ x)^\intercal=W (1\ x) (1\ -1)^\intercal (1\ -1) (1\ x)^\intercal=2W(1-x)^2$. 
Hence $X'=\rr$ for all nonnegative real $W$.

Now let us take $E=\begin{pmatrix} 1 & 0 \\ 1 & -1\end{pmatrix}$. The quadratization as defined by 
Equation~\eqref{quadratization} relative to a $2\times 2$ symmetric matrix $W$ with nonnegative coefficients 
is $\overline{X}=\{x\mid (1\ x)E^\intercal W E (1\ x)^\intercal \geq 0\}$.
We have: 
\[
(1\ x)\begin{pmatrix} 1 & 1 \\ 0 & -1\end{pmatrix}\begin{pmatrix} w_1 & w_3 \\ w_3 & w_2\end{pmatrix} 
\begin{pmatrix} 1 & 0 \\ 1 & -1\end{pmatrix}(1\ x)^\intercal=w_1+2w_3(1-x)+w_2(1-x)^2\enspace .
\] 
To take a matrix $W$ such that $w_2=w_1=0$ and $w_3>0$ implies that $\overline{X}=X$.
\end{example}
Now we introduce an example of the quadratization of the cell $X^1$ for our running example.
\begin{example}
Let us consider the running example and the cell $X^1$. We recall that 
$X^1$ is characterizd by the matrices and vectors:
\[
\begin{array}{ccc}
\left\{
\begin{array}{l}
T_s^1=\begin{pmatrix} -9 & & 7 & & 6\\ -4 & & 8 & & -8\end{pmatrix}\\
\\
c_s^1=(5\ 4)^\intercal
\end{array}
\right.
, & 
\left\{
\begin{array}{l}
T_w^1=\begin{pmatrix}0 & & 0 & & 1\\ 0 & & 0 & & -1\end{pmatrix}\\ 
\\
c_w^1=(3\ 3)^\intercal
\end{array}
\right. 
\text{ and }& 
E^1=\begin{pmatrix}1 & & 0 & & 0 & &0 \\
                              5 & & 9 & & -7 & & -6\\
                              4 & &4 & & -8 && 8\\
                              3 & & 0 & & 0 & & -1\\ 
                                3 & & 0 & & 0 & & 1
               \end{pmatrix}
\end{array}
\]
As suggested we have added the row $(1,0_{1\times 3})$.  
Take for example the matrix:
\[
W^1=\begin{pmatrix}
63.0218 & 0.0163  &  0.0217  & 12.1557  &  8.8835\\
0.0163  &  0.0000  &  0.0000  &  0.0267   & 0.0031\\
0.0217  &  0.0000   & 0.0000  &  0.0094   & 0.0061\\
12.1557  &  0.0267  &  0.0094 &   4.2011  & 59.5733\\
8.8835   & 0.0031   & 0.0061  & 59.5733  &  3.0416
\end{pmatrix}
\] 
We have $\overline{X^1}=\{(x,y,u)\mid (1,x,y,u)E^1 W^1 E^1(1,x,y,u)^\intercal\}\supseteq X^1$. 
In Section~\ref{sec:exp}, we will come back on the generation of $W^1$.
\end{example}
Local positivity of quadratic forms will also be used when a transition from a cell to an other is fired . 
For the moment, we are interested in the set of $(x,u)$ such that $(x,u)\in X^i$ and whose 
the image is in $X^j$ and we denote by $X^{ij}$ the set:
\[
\left\{\xu\in\rdm\left| \xu\in X^i \text{ and } (A^ix+B^iu+b^i,u)\in X^j\right\}\right.
\] 
for all pairs $i,j\in I$. Note that in~\cite{912814}, the authors take
into account all pairs $(i,j)$ such that there exists a state $x_k$ at moment $k$ in $X^i$ 
and the image of $x_k$ that is $x_{k+1}$ is in $X^j$. We will discuss in Subsection~\ref{switchcontribution}
the computation or a reduction to possible switches using linear programming as suggested in~\cite{BisEtal:2005:IFA_2030}.
To construct a quadratization of $X^{ij}$, we use the same approach than before by introducing a $(1+n_i+n_j)\times (1+n_i+n_j)$ 
symmetric matrix $U^{ij}$ with nonnegative entries to get a set $\overline{X^{ij}}$ defined as:
\begin{equation}
\label{switchquad}
\overline{X^{ij}}=\left\{ \xu\in\rdm\left| \uxu^\intercal {E^{ij}}^\intercal U^{ij} E^{ij}\uxu\geq 0\right\}\right.
\end{equation}
where $E^{ij}=\begin{pmatrix} E_s^{ij} \\ E_w^{ij} \end{pmatrix}$ with

\begin{equation}
\label{switchmat}
\begin{array}{c}
E_s^{ij}=
\begin{pmatrix}
1 & & 0_{1\times (d+m)}\\
c_s^i & &-T_s^i\\
c_s^j-T_s^j\begin{pmatrix} b^i\\0\end{pmatrix} & &-T_s^j\begin{pmatrix} A^i & B^i\\ 0_{d\times m} & \Id_{m\times m}\end{pmatrix}
\end{pmatrix}\\
\text{and}\\
E_w^{ij}=
\begin{pmatrix}
c_w^i& & -T_w^i\\
c_w^j-T_w^j\begin{pmatrix} b^i\\0\end{pmatrix} & &-T_w^j\begin{pmatrix} A^i & B^i\\ 0_{d\times m} & \Id_{m\times m}\end{pmatrix}
\end{pmatrix}
\end{array}
\end{equation}
\subsection{Switching cells}
\label{switchcontribution}
We have to manage another constraint which comes from the cell switches. After applying the available law in cell $X^i$,
we have to specify the reachable cells i.e. the cells $X^j$ such that there exists $(x,u)$ satisfying:
\[
(x,u)\in X^i\text{ and } (A^ix+B^iu+b^i,u)\in X^j
\]
We say that a switch from $i$ to $j$ is fireable iff:
\begin{equation}
\label{Eq:feas}
\left\{(x,u)\in\rdm\left| 
\begin{array}{l} 
T_s^i(x,u)^\intercal\ll c_s^i\\ 
T_s^j (A^ix+B^iu+b^i,u)^\intercal \ll c_s^j\\ 
T_w^i(x,u)^\intercal\leq c_w^i\\ 
T_w^j (A^ix+B^iu+b^i,u)^\intercal \leq c_w^j
\end{array}\right\}\right.\neq \emptyset 
\end{equation}
We will denote by $i\to j$ if the switch from $i$ to $j$ is fireable. Recall that the symbol $<$ means that 
we can deal with both strict inequalities and inequalities. Problem~\eqref{Eq:feas} is a linear programming 
feasibility problem with both strict and weak inequalities. However, we only check whether the system 
is solvable and we can detect infeasibility by using Motzkin transposition theorem~\cite{Motzkin}. Motkin's theorem is an 
alternative type theorem, that is we oppose two linear systems such that exactly one of the two is feasible. 
To describe the alternative system, we have to separate strict and weak inequalities and use 
the matrices $E_s^{ij}$ and $E_w^{ij}$ defined at Equation~\eqref{switchmat}.
Problem~\eqref{Eq:feas} is equivalent to check whether the set $\{y=(z,x,u)\in \rr^{1+d+m}\mid E_w^{ij} y\geq 0,\ E_s^{ij} y\gg 0\}$
is empty or not. To detect feasibility we test the infeasibility of the alternative system defined as:
\begin{equation}
\label{eqMotzkin}
\left\{
\begin{array}{l}
(E_s^{ij})^\intercal p^s+(E_w^{ij})^\intercal p=0\\
\\
\sum_{k\in\mathbb{I}} p_k^s=1\\
\\
p_k^s\geq 0,\ \forall\, k\in \mathbb{I}\\
\\
p_i\geq 0,\ \forall\, i\notin \mathbb{I}
\end{array} 
\right.
\end{equation}
From Motzkin's transposition theorem~\cite{Motzkin}, we get the following proposition.
\begin{proposition}
\label{MotzkinProp}
Problem~\eqref{Eq:feas} is feasible iff Problem~\eqref{eqMotzkin} is not.
\end{proposition}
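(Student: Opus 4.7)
The plan is to reduce Problem~\eqref{Eq:feas} to a homogeneous conic feasibility problem and then apply Motzkin's transposition theorem in its standard form: for any matrices $A$ and $B$, the system $\{y \mid A y \gg 0,\ B y \geq 0\}$ is nonempty if and only if the alternative system $\{(p^s, p) \mid A^\intercal p^s + B^\intercal p = 0,\ p^s \geq 0,\ p \geq 0,\ p^s \neq 0\}$ is empty.

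First I would argue that Problem~\eqref{Eq:feas} is feasible if and only if the conic system $\mathcal{C} := \{y = (z, x, u)^\intercal \in \rr^{1+d+m} \mid E_s^{ij} y \gg 0,\ E_w^{ij} y \geq 0\}$ is nonempty. By construction, the first row of $E_s^{ij}$ is $(1,\ 0_{1\times(d+m)})$, so the strict inequality $E_s^{ij} y \gg 0$ forces $z > 0$ on every element of $\mathcal{C}$. Since the defining inequalities are homogeneous in $y$, rescaling any such $y$ by $1/z$ produces the point $(1, x/z, u/z)^\intercal$ in $\mathcal{C}$, and by the formulas in~\eqref{switchmat} the remaining strict and weak rows then translate back into exactly the strict and weak inequalities appearing in~\eqref{Eq:feas}. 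Conversely, any feasible $(x,u)$ of Problem~\eqref{Eq:feas} yields $(1, x, u)^\intercal \in \mathcal{C}$.

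Next I would apply Motzkin's theorem to $\mathcal{C}$ with $A = E_s^{ij}$ and $B = E_w^{ij}$. Since $E_s^{ij}$ has at least one row (the homogenizing one), the hypothesis of the theorem is met, and one gets that $\mathcal{C}$ is empty if and only if there exist nonnegative vectors $p^s$ and $p$ with $p^s \neq 0$ satisfying $(E_s^{ij})^\intercal p^s + (E_w^{ij})^\intercal p = 0$. Because the whole alternative system is positively homogeneous, the condition $p^s \geq 0,\ p^s \neq 0$ can be normalized as $\sum_k p^s_k = 1$, which produces precisely Problem~\eqref{eqMotzkin}.

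The main obstacle is essentially bookkeeping: one must check that the index set $\mathbb{I}$ appearing in~\eqref{eqMotzkin} ranges over the rows of $E_s^{ij}$ (i.e.\ the strict inequality indices of the two cells $X^i$ and $X^j$ together with the extra homogenizing row), and that the multipliers $p^s_k$ and $p_i$ are matched to the strict and weak constraints respectively. Once these identifications are made, Proposition~\ref{MotzkinProp} is a direct application of Motzkin's transposition theorem.
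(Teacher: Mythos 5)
Your proof is correct and follows essentially the same route as the paper, which simply reduces Problem~\eqref{Eq:feas} to nonemptiness of the homogeneous system $\{y=(z,x,u)\in\rr^{1+d+m}\mid E_s^{ij}y\gg 0,\ E_w^{ij}y\geq 0\}$ and then invokes Motzkin's transposition theorem without giving further detail. Your additional observations---that the row $(1,0_{1\times (d+m)})$ in $E_s^{ij}$ forces $z>0$ so that rescaling by $1/z$ recovers a genuine solution of~\eqref{Eq:feas}, and that the condition $p^s\geq 0,\ p^s\neq 0$ can be normalized to $\sum_k p_k^s=1$ by positive homogeneity---correctly supply exactly the steps the paper leaves implicit.
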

However reasoning directly on the matrices can allow unfireable switchs. For certain initial conditions,
for all $k\in\nn$, the condition $(x_k,u_k)\in X^i\text{ and } (A^i x_k+B^i u+b^i,u)\in X^j$ does not hold 
whereas Problem~\eqref{Eq:feas} is feasible. To avoid it, we must know all the possible trajectories of the 
system (which we want to compute) and remove all inactivated switchs. A sound way to underapproximate unfireable 
transitions is to identify unsatisfiable sets of linear constraints. 
\begin{example}
We continue to detail our running example. More precisely, we consider the possible switches. 
We take for example the cell $X^2$. To switch from cell $X^2$ to cell $X^1$ is possible if 
the following system of linear inequalities has a solution:
\begin{equation}
\label{eqmeet}
\begin{array}{rcc}
-9x+7y+6u&<&5\\
-0.8532x+2.5748y-10.4460&<&-68\\
-3.3662x+2.1732y-1.1084u&<&-58\\
4x-8y+8u&\leq&-4\\
u&\leq& 3\\ 
-u&\leq& 3
\end{array}
\end{equation}
The two first consists in constraining the image of $(x,y,u)$ to belong to $X^1$ 
and the four last constraints correspond to the definition of $X^2$. The representation of these 
two sets ($X^2$ and the preimage of $X^1$ by the law defined in $X^2$) is given at Figure~\ref{figuremeet}.
\begin{figure}
\includegraphics[scale=0.25]{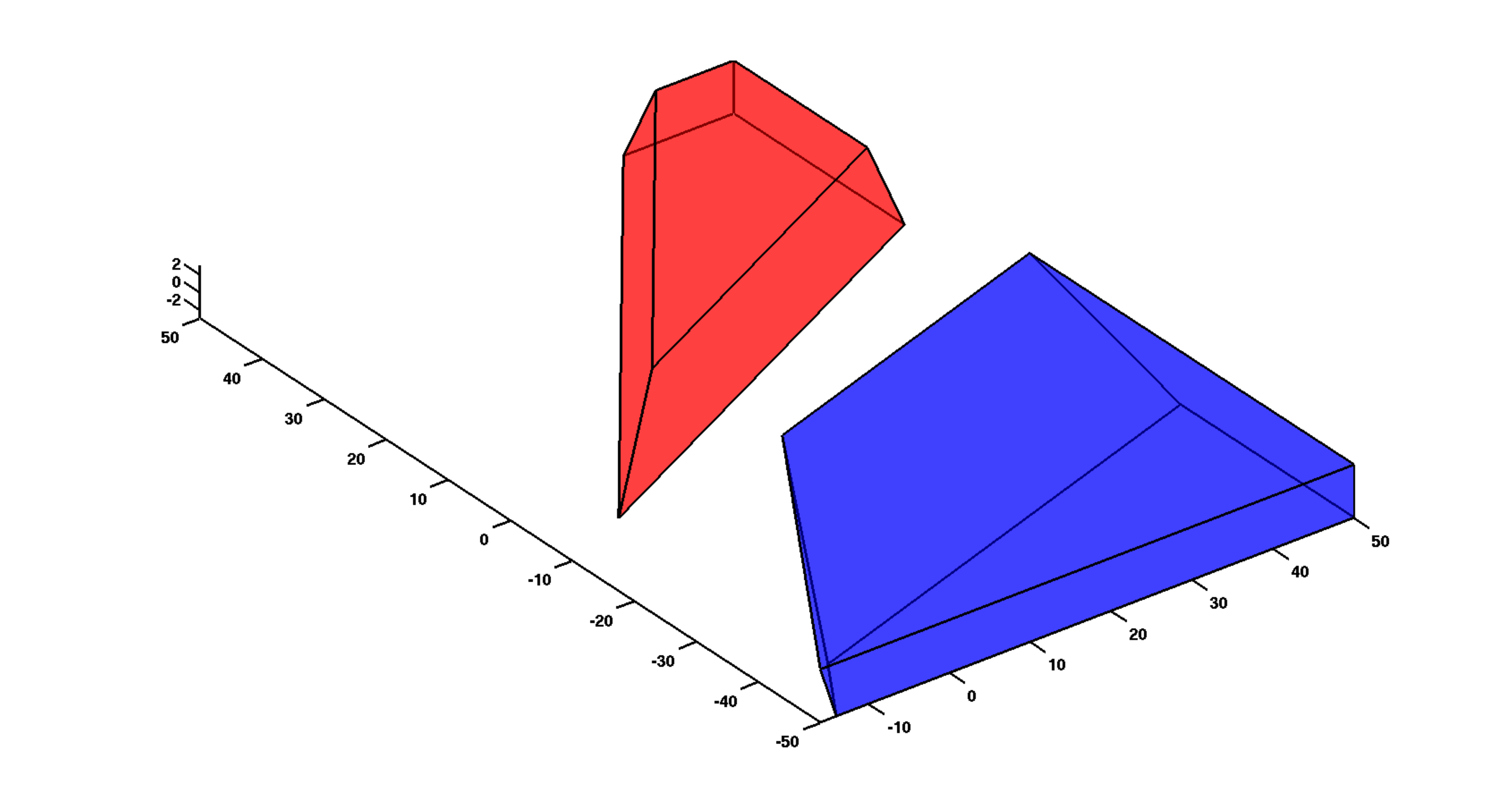}
\caption{The truncated representation of $X^2$ in red and the preimage of $X^1$ by the law inside $X^2$ in blue}
\label{figuremeet}
\end{figure}
We see at Figure~\ref{figuremeet} that the system of inequalities defined at Equation~\eqref{eqmeet} seems to not have 
solutions. We check that using Equation~\eqref{eqMotzkin} and Proposition~\ref{MotzkinProp}. The matrices $E_s^{ij}$ 
and $E_w^{ij}$ of Equation~\eqref{eqMotzkin} are in this example:
\[
E_s^{21}=\begin{pmatrix}
5 & & 9 & & -7 & & -6\\
-68 & & 0.8532 & & -2.5748 & & 10.446 \\ 
-58 & & 3.3662 & & -2.1732 & & 1.1084 
\end{pmatrix}
\text{ and }
E_w^{21}=\begin{pmatrix}
-4 & & -4 & &8 & & -8 \\ 
3 & &0 & & 0 & &-1 \\
3 & &0 & & 0 & & 1
\end{pmatrix}
\] 
We thus solve the linear program defined in Equation~\eqref{eqMotzkin} (with Matlab and Linprog) and we found 
$p=(0.8735,0.0983,0.0282)^\intercal$ and $q=(0.3325,14.2500,7.8461)^\intercal$. This means that the alternative system 
is feasible and consequently the initial is not from Proposition~\ref{MotzkinProp}. Finally the transition from 
$X^2$ to $X^1$ is not possible.
\end{example}
\subsection{Piecewise quadratic Lyapunov functions to compute invariant sets}
Now we adapt the work of Rantzer and Johansson~\cite{1205102} and the work of Mignone et al~\cite{912814} 
to compute of an invariant set for switched systems i.e. a subset $\inset$ such that $(x_k,u)\in \inset$ 
implies $(x_{k+1},u)\in \inset$. These works are instead focused on deciding whether a piecewise affine system is 
global asymptotic convergent or not. Even if the problem is undecidable~\cite{BBKPT00} the latter authors prove a 
stronger property on the system: there exists a piecewise Lyapunov functions for the piecewise affine systems.     
Rantzer and Johansson~\cite{1205102} and Mignone et al~\cite{912814} suggest to compute a piecewise 
quadratic function as Lyapunov function in the case of discrete-time piecewise affine systems to prove
GAS property. 
Recall that a piecewise quadratic function on $\rd$ is a function defined on a polyhedric partition of $\rd$ 
which is quadratic on each polyhedron of the partition. In this paper, we propose
to compute a (weaker) piecewise Lyapunov function to characterize an invariant set for our piecewise affine systems. 
In this section, we will denote by $V$ this function. The pieces are given by the cells of the piecewise affine system 
and thus $V$ is defined as:
\[
\begin{array}{ll}
V(x,u)&=V^i(x,u), \text{ if }\xu\in X^i\\
      &=\xu^\intercal P^i \xu+2{q^i}^\intercal \xu,\text{ if } \xu\in X^i
\end{array}    
\]
The function $V^i$ is thus a local function only defined on $X^i$.

A sublevel set $S_{\alpha}$ of $V$ of level $\alpha\in\rr$ is represented as:
\[
\begin{array}{lcl}
S_{\alpha}&=&\bigcup_{i\in I}S_{i,\alpha}\\
&=&\bigcup_{i\in I}\left\{\xu \in X^i\mid \xu^\intercal P^i\xu+2{q^i}^\intercal x\leq \alpha\right\}\\
&=&\bigcup_{i\in I}\left\{\xu \in X^i\mid \uxu^\intercal \begin{pmatrix} -\alpha & {q^i}^\intercal\\ 
q^i & P^i\end{pmatrix}\uxu\leq 0\right\}
\enspace .
\end{array} 
\]
The set $S_{i,\alpha}$ is thus the local sublevel set of $V^i$ associated to the level $\alpha$.

So we are looking a family of pairs of a matrix and a vector $\{(P^i,q^i)\}_{i\in I}$ and
a real $\alpha\in\rr$ such that $S_{\alpha}$ is invariant by the piecewise affine system. 
To obtain invariance property, we have to constraint $S_{\alpha}$ to contain initial conditions 
of the system. Finally, to prove that the reachable set is bounded, we have to constraint
$S_{\alpha}$ to be bounded.

Before deriving the semi-definite constraints, let us first state a useful result in Proposition~\ref{safeineq}.
This result allows to encode implications into semi-definite constraint in a safe way safe.
The implication must involve quadratic inequalities on both sides. 
\begin{proposition}
\label{safeineq}
Let $A,B,C$ be $d\times d$ matrices. Then 
$C+A+B\succeq 0$ holds implies that 
the implication
$(y^\intercal Ay\leq 0 \land\ y^\intercal By\leq 0) \implies
y^\intercal Cy\geq 0$
holds. 
\end{proposition}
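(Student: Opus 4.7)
The plan is to derive the implication directly from the definition of positive semi-definiteness, without any extra machinery. First I would unfold the assumption $C + A + B \succeq 0$ into the scalar statement
\[
y^\intercal C y + y^\intercal A y + y^\intercal B y \geq 0 \quad \text{for every } y \in \rd,
\]
using the linearity of the map $M \mapsto y^\intercal M y$. Then I would fix an arbitrary $y \in \rd$ satisfying the two hypotheses $y^\intercal A y \leq 0$ and $y^\intercal B y \leq 0$ and rearrange the displayed inequality as $y^\intercal C y \geq -y^\intercal A y - y^\intercal B y$. Since both summands on the right-hand side are non-negative by assumption, the conclusion $y^\intercal C y \geq 0$ follows immediately.

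Conceptually, the argument is the simplest instance of the S-procedure (or a lossy Lagrangian relaxation), obtained by fixing both multipliers in front of $y^\intercal A y$ and $y^\intercal B y$ to $1$. The loss of generality this introduces is harmless in the sequel, since the weight of each constraint can always be absorbed into the nonnegative matrices $W^i$ and $U^{ij}$ that scale the cell-quadratizations $E^i$ and $E^{ij}$ from Section~\ref{quadratizationsub}; in the final SDP encoding, those weights are anyway decision variables.

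There is essentially no obstacle: the whole proposition is a one-line calculation and its technical content is nil. Its value lies elsewhere, namely in providing a uniform recipe for turning quadratic implications of the form ``$(x,u)$ lies in some quadratized polyhedron $\Longrightarrow$ Lyapunov decrease'' into a single linear matrix inequality in the unknowns $P^i$, $q^i$, $W^i$, $U^{ij}$, which is precisely what will let us hand the invariant-synthesis problem to an off-the-shelf SDP solver. The only minor subtlety worth flagging is that $y^\intercal M y$ depends only on the symmetric part of $M$, so $A$, $B$, $C$ may be taken symmetric without loss of generality, which is the setting in which $\succeq$ is usually interpreted.
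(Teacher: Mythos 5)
Your proof is correct and is essentially identical to the paper's own argument: both unfold $C+A+B\succeq 0$ into the scalar inequality $y^\intercal(C+A+B)y\geq 0$ and rearrange to get $y^\intercal C y \geq -y^\intercal A y - y^\intercal B y \geq 0$ under the two hypotheses. The additional remarks on the S-procedure and symmetrization are accurate context but not needed for the proof itself.
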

\begin{proof}
Suppose that $C+A+B\succeq 0$. It is equivalent to say 
$y^\intercal (C+A+B) y\geq 0$ for all $y\in\rd$. Now 
pick $z\in\rd$ such that $z^\intercal A z\leq 0$ and 
$z^\intercal B z\leq 0$. Since $z^\intercal Cz\geq -z^\intercal A z-z^\intercal B z$, we conclude 
that $z^\intercal Cz\geq 0$ and the implication is true.
\end{proof}
\subsubsection{Writing invariance as semi-definite constraints}.
We assume that $(x,u)\in X^i\cap S_{i,\alpha}$ (this index $i$ is unique). Invariance means that if we apply the available law to $(x,u)$ and 
suppose that the image of $(x,u)$ belongs to some cell $X^j$ (notation $i\to j$), then the image of $(x,u)$ belongs to 
$S_{j,\alpha}$. Note that $(x,u)\in X^i$ 
and its image is supposed to be in $X^j$ then $(x,u)\in X^{ij}$. Let $(i,j)\in I^2$ such that $i\to j$, invariance 
translated in inequatilities and implication gives :
\begin{equation}
\label{equationstability}
\xu\in X^{ij} \land\ \xu\in S_{i,\alpha}
\implies \begin{pmatrix}A^i x+B^iu+b^i\\ u\end{pmatrix}\in S_{j,\alpha}
\end{equation}
We can use the relaxation of Subsection~\ref{quadratizationsub} as representation of cells and use 
matrix variables $W^i$ and $U^{ij}$ to encode their quadratization. We get, for $(i,j)\in I^2$ such that $i\to j$:
\begin{equation}
\label{equationstability2}
\begin{array}{ll}
&\uxu^\intercal {E^{ij}}^\intercal U^{ij} E^{ij} \uxu \geq 0 \land\
\uxu^\intercal \begin{pmatrix} -\alpha & {q^i}^\intercal\\ q^i & P^i \end{pmatrix}\uxu\leq 0\\
\implies&\uxu^\intercal \left({F^i}^\intercal \begin{pmatrix} -\alpha & {q^j}^\intercal\\q^j & P^j\end{pmatrix} 
F^i\right)\uxu \leq 0
\end{array}
\end{equation}
where $E^{ij}$ is the matrix defined at Equation~\eqref{switchquad} and  
$F^i$ is defined at Equation~\eqref{homogeneouslaw}. 

Finally, we obtain a stronger condition by considering semi-definite constraint such as 
Equation~\eqref{equationstabilityfinal}. Proposition~\ref{safeineq} proves that if $(P^i,P^j,q^i,q^j,U^{ij})$
is a solution of Equation~\eqref{equationstabilityfinal} then $(P^i,P^j,q^i,q^j,U^{ij})$ satisfies 
Equation~\eqref{equationstability2}. For $(i,j)\in I^2$ such that $i\to j$:  
\begin{equation}
\label{equationstabilityfinal}
-{F^i}^\intercal \begin{pmatrix} 0 & {q^j}^\intercal\\q^j & P^j\end{pmatrix} 
F^i+\begin{pmatrix} 0 & {q^i}^\intercal\\ q^i & P^i \end{pmatrix}
-{E^{ij}}^\intercal U^{ij} E^{ij} \succeq 0\enspace .
\end{equation}
Note that the symbol $-\alpha$ is cancelled during the computation. 
\subsubsection{Integrating initial conditions}.
To complete invariance property, invariant set must contain initial conditions. Suppose 
that initial condition is a polyhedron $X^0=\{(x,u)\in\rdm\mid T_w^0(x,u)\leq c_w^0,\ T_s^0(x,u)\ll c_s^0\}$.
We must have $X^0\subseteq S_{\alpha}$. But $X^0$ is contained in the union of $X^i$. Hence $X^0$ is the union 
over $i\in I$ of the sets $X^0\cap X^i$. If, for all $i\in I$, the set $X^0\cap X^i$ is contained in $S_{i,\alpha}$ then 
$X^0\subseteq S_{\alpha}$. We can use the same method as before to express that all sets $S_{i,\alpha}$ such that 
$X^0\cap X^i\neq \emptyset$ must contain $X^0\cap X^i$. In term of implications, it can be rewritten 
as for all $i\in I$ such that $X^0\cap X^i\neq \emptyset$:
\begin{equation}
\label{initeq}
(x,u)\in X^0\cap X^i\implies (x,u) P^i (x,u)^\intercal+2(x,u) q^i \leq \alpha
\end{equation}
Since $X^0\cap X^i$ is a polyhedra, it admits some quadratization that is:
$\overline{X^0\cap X^i}=\{(x,u)\in \rdm \mid (1,x,u){E^{0i}}^\intercal Z^i E^{0i}(1,x,u)^\intercal\geq 0\}$
where $E^{0i}=\begin{pmatrix}E_s^{0i}\\E_w^{0i}\end{pmatrix}$ with: 
\begin{displaymath}
E_w^{0i}=
\begin{pmatrix}
c_w^0 & -T_w^0\\
c_w^i & -T_w^i
\end{pmatrix} 
\text{ and }
E_s^{0i}=
\begin{pmatrix}
1 & 0_{1\times (d+m)}\\
c_s^0 & -T_s^0\\
c_s^i & -T_s^i
\end{pmatrix}
\end{displaymath} and $Z^i$ 
is some symmetric matrix whose coefficients are nonnegative.

For all $i\in I$ such that $X^0\cap X^i\neq \emptyset$, we obtain a stronger notion by introducing 
semi-definite constraints:
\begin{equation}
\label{initsdp}
-\begin{pmatrix} -\alpha & {q^i}^\intercal \\q^i & P^i\end{pmatrix}
-{E^{0i}}^\intercal Z^i E^{0i}\succeq 0
\end{equation}
Proposition~\ref{safeineq} proves that if $(P^i,q^i,Z^i)$ is a solution of Equation~\eqref{initsdp}
then $(P^i,q^i,Z^i)$ satisfies Equation~\eqref{initeq}.

Note since $X^0\cap X^i$ is a polyhedron then its emptyness can be decided by checking the feasibility 
of the linear problem~\eqref{initcell} and by using of same argument than Proposition~\ref{MotzkinProp}. 
\begin{equation}
\label{initcell}
\left\{
\begin{array}{l}
(E_s^{0i})^\intercal p^s+(E_w^{0i})^\intercal p=0\\
\\
\sum_{k\in\mathbb{I}} p_k^s=1\\
\\
p_k^s\geq 0,\ \forall\, k\in \mathbb{I}\\
\\
p_i\geq 0,\ \forall\, i\notin \mathbb{I}
\end{array} 
\right.
\end{equation}
Linear program~\eqref{initcell} is feasible iff $X^0\cap X^i=\emptyset$.
\subsubsection{Writing boundedness as semi-definite constraints}.
The sublevel $S_{\alpha}$ is bounded if and only if for all $i\in I$, the sublevel $S_{i,\alpha}$ is bounded 
The boundedness constraint in term of implications is, for all $i\in I$, there exists $\beta\geq 0$:
\begin{equation}  
\label{equationboundiness}
(x,u)\in X^i \land \xu\in S_{i,\alpha}\implies \norm{(x,u)}_2^2\leq \beta
\end{equation}
where $\norm{\cdot}_2$ denotes the Euclidian norm of $\rdm$.

As invariance, we use the quadratization of $X^i$ and the definition of $S_{i,\alpha}$. We use the fact
that $\norm{(x,u)}_2^2=\xu^\intercal \Id_{(d+m)\times (d+m)}\xu$ and we get for all $i\in I$:
\begin{equation}  
\label{equationboundiness2}
\begin{array}{ll}
&\uxu^\intercal {E^i}^\intercal W^i E^i \uxu \geq 0 \text{ and }
\uxu^\intercal \begin{pmatrix} -\alpha & {q^i}^\intercal\\q^i & P^i \end{pmatrix}\uxu\leq 0\\  
\implies &\uxu^\intercal \begin{pmatrix} -\beta & 0_{1\times (d+m)} \\
0_{(d+m)\times 1} & \Id_{(d+m)\times (d+m)}\end{pmatrix}\uxu\leq 0
\end{array}
\end{equation}
where $E^i$ is defined in Equation~\eqref{quadratization}. 

Finally, as invariance we obtain a stronger condition by considering semi-definite constraint such as 
Equation~\eqref{equationboundfinal}. Proposition~\ref{safeineq} proves that $(P^i,q^i,W^i)$ is a solution 
of Equation~\eqref{equationboundfinal} the $(P^i,q^i,W^i)$ satisfies Equation~\eqref{equationboundiness2}. 
For all $i\in I$: 
\begin{equation}
\label{equationboundfinal}
-{E^i}^\intercal W^i E^i
+\begin{pmatrix} -\alpha & {q^i}^\intercal\\q^i  & P^i \end{pmatrix}
+\begin{pmatrix} \beta & 0_{1\times (d+m)}\\
0_{(d+m)\times 1} & -\Id_{(d+m)\times (d+m)}\end{pmatrix} \succeq 0
\end{equation}
\subsubsection{Method to compute invariant set for piecewise affine systems 
and prove the boundedness of its reachable set}.
To compute a piecewise ellipsoidal invariant set for a piecewise affine systems 
of the form~\eqref{pwa} whose initial conditions is a polyhedron, we can proceed as follows:
\begin{enumerate}
\item Define a matrix $L$ of size $I\times I$ following Equation~\eqref{Eq:feas}:
set $L(i,j)=1$ if Problem~\eqref{eqMotzkin} is not feasible and $L(i,j)=0$ otherwise;
\item Define the real variables $\alpha,\beta$;
\item For $i\in I$, compute the matrix $E^i$ of Equation~\eqref{quadratization}
define the variable $P^i$ as a symmetric matrix of size $(d+m)\times (d+m)$, 
the variable matrix $W^i$ with nonnegative coefficients of size 
$(\sharp\text{ lines of } E^i)\times (\sharp\text{ lines of }E^i)$ and add the constraint
\eqref{equationboundfinal}. If Problem~\eqref{initcell} is not feasible, 
add Constraint~\eqref{initsdp};
\item For all $(i,j)\in I^2$, if $L(i,j)=1$ construct the matrix $E^{ij}$ defined by Equation~\eqref{switchquad} 
and define the symmetric matrix variable $U^{i,j}$ of the size $(\sharp\text{ lines of } E^{ij})\times 
(\sharp\text{ lines of } E^{ij})$ with nonnegative coefficients and add the constraint~\eqref{equationstabilityfinal};
\item Add as linear objective function the sum of $\alpha$ and $\beta$ to minimize; 
\item Solve the semi-definite program; 
\item If there exists a solution then the set $\bigcup_{i\in I}\{(x,u)\in X^i\mid (x,u)P_{opt}^i(x,u)^\intercal+2(x,u) 
q_{opt}^i\leq \alpha_{opt}\}$ is a bounded invariant of system~\eqref{pwa} and the norm $\norm{(x,u)}$ is less 
than $\beta_{opt}$ for all the reachable $(x,u)$ of the system.   
\end{enumerate}  
\subsection{Solution}
The method is implemented in Matlab and the solution is given by a semi-definite 
programming solver in Matlab. For our running example, Matlab returns the following 
the values:
\[
\begin{array}{cc}
\alpha_{opt}=242.0155 \\
\beta_{opt}=2173.8501 
\end{array}
\]
This means that $\norm{(x,y,u)}_2^2=x^2+y^2+u^2\leq \beta_{opt}$. We can conclude, for example, 
that the values taken by the variables $x$ are between $[-46.6154,46.6154]$. The value $\alpha_{opt}$
gives the level of the invariant sublevel of our piecewise quadratic Lyapunov function where the local 
quadratic functions are characterized by the following matrices and vectors:
\[ 
P^1=\begin{pmatrix}
 1.0181 & -0.0040 & -1.1332	\\
 -0.0040	&  1.0268 & -0.5340\\	
 -1.1332	& -0.5340 &	-13.7623
\end{pmatrix}\text{ and }
q^1=(0.1252,1.3836,-29.6791)^\intercal
\]
\[
P^2=\begin{pmatrix}
  9.1540 & -7.0159 & -2.6659 \\
 -7.0159 &  9.5054 & -2.4016 \\
 -2.6659 & -2.4016 & -8.9741 
\end{pmatrix}\text{ and }
q^2=(-21.3830,-44.6291,114.2984)^\intercal 
\]
\[
P^3= \begin{pmatrix}
  1.1555 & -0.3599 & -2.6224 \\
 -0.3599 &  2.4558 & -2.8236 \\
 -2.6224 & -2.8236 & -2.3852 \\
\end{pmatrix} \text{ and }
q^3= (-5.3138,6.7894,-40.5537)^\intercal
\]
\[
P^4=\begin{pmatrix}
  3.7314 & -3.4179 & -3.1427 \\
 -3.4179 &  6.1955 &  0.9499 \\
 -3.1427 &  0.9499 &-10.6767 \\
\end{pmatrix} \text{ and }
q^4=(28.5011,-73.5421,48.2153)^\intercal 
\]
Finally, for conciseness reason, we only give the matrix certificates for the cell $X^1$. 
First we give the matrix $W^1$ which encodes the quadratization of the guard $X^1$. 
Recall that this matrix ensures that $(x,u)\mapsto \alpha-(x,u) P^1 (x,u)^\intercal-2(x,u)q^i$ 
is nonnegative on $X^1$.  
\[   
\begin{array}{c}
W^1=\begin{pmatrix}
63.0218 & 0.0163  &  0.0217  & 12.1557  &  8.8835\\
0.0163  &  0.0000  &  0.0000  &  0.0267   & 0.0031\\
0.0217  &  0.0000   & 0.0000  &  0.0094   & 0.0061\\
12.1557  &  0.0267  &  0.0094 &   4.2011  & 59.5733\\
8.8835   & 0.0031   & 0.0061  & 59.5733  &  3.0416
\end{pmatrix}
\end{array}
\]
Secondly, we give the matrices $U^{1j}$ which encodes the quadratization of polyhedron $X^{1j}$. 
Recall that those matrices ensure that the image of $(1,x,u)$ by $F^1$ belongs to the set 
$S_{j,\alpha}$ for all $(1,x,u)$ such that $F^1(1,x,u)\in X^j$.
\[
U^{11}=\begin{pmatrix}
 0.0004	 & 0.0000 &  0.0000	&  0.0000 &  0.0000	&  0.0000 &  0.0001\\	
  0.0000 & -0.0000 & -0.0000 & -0.0000 & -0.0000 & -0.0000 & -0.0000\\	
  0.0000 & -0.0000 & -0.0000 & -0.0000 & -0.0000 &  0.0000 & -0.0000\\	
  0.0000 & -0.0000 & -0.0000 & -0.0000 & -0.0000 & -0.0000 & -0.0000\\	
  0.0000 & -0.0000 & -0.0000 & -0.0000 & -0.0000 &  0.0000 & -0.0000\\
  0.0000 & -0.0000 &  0.0000 & -0.0000 &  0.0000 &  0.0000 &  0.0000\\	
  0.0001 & 	 -0.0000 & -0.0000 & -0.0000 & -0.0000 &  0.0000 &  0.0001
 \end{pmatrix}\\
\]
\[
U^{12}=\begin{pmatrix}
 2.1068	&  0.4134	&  0.0545	&  1.4664	&  0.1882	&  2.3955	 & 2.4132\\	
  0.4134&	  0.0008	&  0.0047	&  0.0009	&  0.0819	&  0.5474&	  0.0484\\	
  0.0545&	  0.0047	&  0.0050	&  0.0147	&  0.0097	&  0.1442&	  0.2316\\	
  1.4664&	  0.0009	&  0.0147	&  0.0041	&  0.3383	&  0.8776&	  0.0999\\	
  0.1882&	  0.0819	&  0.0097	&  0.3383	&  0.0675	&  0.4405&	  0.4172\\	
  2.3955&	  0.5474	&  0.1442	&  0.8776	&  0.4405	&  8.1215&	  9.6346\\	
  2.4132&	 0.0484	&  0.2316	&  0.0999	&  0.4172	&  9.6346	&  0.9532
\end{pmatrix}\\
\]
\[
U^{13}=\begin{pmatrix}
  0.3570 &  0.2243 & 0.0031 &  0.0050	&  0.1431	&  0.0388	&  0.7675\\
  0.2243 &  0.0201	&  0.0023 &  0.0050	 & 0.1730	&  0.0494	&  0.1577\\	
  0.0031 &  0.0023	&  0.0001 &  0.0001	 & 0.0071	&  0.0006	& 0.0088\\	
  0.0050 &  0.0050	&  0.0001 &  0.0002	 & 0.3563	&  0.0009	&  0.0168\\	
  0.1431 &  0.1730	&  0.0071 &  0.3563	 & 0.0527	&  0.2689	&  0.8979\\	
  0.0388 &  0.0494	&  0.0006 &  0.0009	 & 0.2689	&  0.0137	&  0.1542\\	
  0.7675 &  0.1577	&  0.0088 &  0.0168	 & 0.8979	&  0.1542	&  0.2747
  \end{pmatrix} \\
\]
\[
U^{14}=\begin{pmatrix}
 1.3530	&  0.1912	&  0.0280	&  0.1178	&  2.9171	&  0.7079	&  1.4104\\	
  0.1912 &	  0.0512 &  0.0068	&  0.0326	&  1.7179	&  0.3764	&  0.6045\\	
  0.0280 &	  0.0068 & 0.0022	&  0.0048	&  0.1396	&  0.0264	&  0.0679\\	
  0.1178 &	  0.0326 & 0.0048	&  0.0409	&  0.5231	&  0.1204	&  0.2390\\	
  2.9171 &	  1.7179 &  0.1396	&  0.5231	& 15.0992	&  5.1148	& 14.3581\\	
  0.7079 &	  0.3764 &  0.0264	&  0.1204	&  5.1148	&  0.5102	&  1.6230\\	
  1.4104 & 	  0.6045 &  0.0679	&  0.2390	& 14.3581	&  1.6230	&  1.2985
\end{pmatrix}
\]
We remark that $U^{11}$ has negative coefficients whereas in our method,
we are looking for a nonnegative coefficients matrix. It is due to the interior point method
which is used to solve the semi-definite programming problems. 
Interior point methods returns $\epsilon$-optimal solution i.e. a solution which belongs to the ball of radius $\epsilon$
centered at an optimal solution. Hence, the solution furnished by the solver can slightly violate the constraints 
of the semi-definite program. We are aware of that and the projection of the returned solution on the 
feasible set should be studied as a future work.   


\section{Experimentations}
\label{sec:exp}

To illustrate the applicability of our method to a wide set of examples, we
generated about a thousand of dynamical systems with at most 4 partition cells,
4 state variables and a single input.

In~\cite{BBKPT00}, the authors show (Theorem 2) that to determine the stability 
a piecewise affine dynamical system is undecidable. 
In order to generated more stable examples, we restricted the class of program
generated. Each partition cell affine semantics would
be \begin{inparaenum}[(i)] \item generated with
small coefficients, since big coefficients are usually avoided in controllers
and, \item enforced locally stable when needed by updating the values of the
coefficients using the spectal radius\end{inparaenum}.

Our example synthesis still does not guaranty to obtain globally stable system,
but, with these required properties of local stability and small coefficients,
it is more likely that switching from one cell to the other would not break
stability and therefore boundedness of the reachable states. The intuition
behind is that when we pass from a cell to another cell, we multiply a vector by
a small number then all the coordinates of the vector image are strictly smaller
than the ones of initial vector.

About 300 of such 1000 examples are automatically shown to be bounded using our
technique while the class of program considered is unlikely to be analyzable
with other static analysis tools the author are aware of, including the previous
analyzes proposed~\cite{DBLP:conf/atva/RouxG13}. A typical run of the analysis,
including the time to generate the problem instance, is about 20s.

All the computation have been performed within Matlab, including the synthesis
of the examples. The source code of the analysis as well a document summarizing
the examples and their analysis is available at
\url{https://cavale.enseeiht.fr/piecewisequadratic14/}.


  


\section{Conclusion}
\label{sec:conclu}

The presented approach is able, considering a piecewise affine system, to
compute a piecewise quadratic invariant able to bound the set of reachable
state.

The technique extends the classical quadratic Lyapunov function synthesis using
SDP solvers by formulating a more complex set of contraints to the SDP
solver. This new formulation accounts the definition of the partitionning and
encodes within the SDP constraints the relationship between partitions. 

In practice our technique has been applied to a wide set of generated examples and
was able to bound their reachable state space while a global quadratic invariant
was proven not computable.

Our future work will consider the combination of this technique with other
formal methods. A first direction will rely on the computed piecewise quadratic form
as a template domain, bounding its value on some code using either Kleene
iterations~\cite{CousotCousot77-1} or policy
iteration~\cite{DBLP:journals/jsc/GawlitzaSAGG12}. This will require to extend
the existing algorithms to fit this piecewise description of the template.

A second direction is to ease the applicability of the method and to intregrate
the technique in a more common analysis framework. A requirement for the
presented work is to obtain a global representation of the program, as matrix
updates and conditions. Existing static analysis~\cite{DBLP:conf/atva/RouxG13}
used for policy iteration extracts such a graph with the appropriate
representation. We plan to integrate the two frameworks to ease the
applicability on more realistic programs in an automated fashion.  


\bibliographystyle{alpha}
\bibliography{piecewisequadraticlyapunovbib}

\newcommand{\etalchar}[1]{$^{#1}$}
\begin{thebibliography}{MFTM00}

\bibitem[{A}ll09]{AllamigeonThesis}
X.~{A}llamigeon.
\newblock {\em Static analysis of memory manipulations by abstract
  interpretation --- {A}lgorithmics of tropical polyhedra, and application to
  abstract interpretation}.
\newblock PhD thesis, \'Ecole Polytechnique, Palaiseau, France, November 2009.

\bibitem[BBK{\etalchar{+}}01]{BBKPT00}
V.~Blondel, O.~Bournez, P.~Koiran, C.~Papadimitriou, and J.~Tsitsiklis.
\newblock Deciding stability and mortality of piecewise affine dynamical
  systems.
\newblock {\em Theoretical Computer Science A}, 1--2(255):687--696, 2001.

\bibitem[BCC{\etalchar{+}}11]{DBLP:journals/sigsoft/BertraneCCFMMR11}
J.~Bertrane, P.~Cousot, R.~Cousot, J.~Feret, L.~Mauborgne, A.~Min{\'e}, and
  X.~Rival.
\newblock Static analysis by abstract interpretation of embedded critical
  software.
\newblock {\em ACM SIGSOFT Software Engineering Notes}, 36(1):1--8, 2011.

\bibitem[BGLM05]{BisEtal:2005:IFA_2030}
P.~Biswas, P.~Grieder, J.~L{\"o}fberg, and M.~Morari.
\newblock {A Survey on Stability Analysis of Discrete-Time Piecewise Affine
  Systems}, July 2005.

\bibitem[CC77]{CousotCousot77-1}
P{.} Cousot and R{.} Cousot.
\newblock Abstract interpretation: a unified lattice model for static analysis
  of programs by construction or approximation of fixpoints.
\newblock In {\em Conference Record of the Fourth Annual ACM SIGPLAN-SIGACT
  Symposium on Principles of Programming Languages}, pages 238--252, Los
  Angeles, California, 1977. ACM Press, New York, NY.

\bibitem[CH78]{DBLP:conf/popl/CousotH78}
P.~Cousot and N.~Halbwachs.
\newblock Automatic discovery of linear restraints among variables of a
  program.
\newblock In A.~Aho, S.~Zilles, and T.~Szymanski, editors, {\em POPL}, pages
  84--96. ACM Press, 1978.

\bibitem[CS11]{DBLP:conf/esop/ColonS11}
M.~Col{\'o}n and S.~Sankaranarayanan.
\newblock Generalizing the template polyhedral domain.
\newblock In G.~Barthe, editor, {\em ESOP}, volume 6602 of {\em Lecture Notes
  in Computer Science}, pages 176--195. Springer, 2011.

\bibitem[Fer04]{DBLP:conf/esop/Feret04}
J.~Feret.
\newblock Static analysis of digital filters.
\newblock In D.~Schmidt, editor, {\em ESOP}, volume 2986 of {\em Lecture Notes
  in Computer Science}, pages 33--48. Springer, 2004.

\bibitem[FR94]{DBLP:conf/slp/FileR94}
G.~Fil{\'{e}} and F.~Ranzato.
\newblock Improving abstract interpretations by systematic lifting to the
  powerset.
\newblock In {\em Logic Programming, Proc. of the 1994 International Symposium,
  Ithaca, New York, USA, November 13-17, 1994}, pages 655--669, 1994.

\bibitem[GGP09]{DBLP:conf/cav/GhorbalGP09}
K.~Ghorbal, E.~Goubault, and S.~Putot.
\newblock The zonotope abstract domain taylor1+.
\newblock In A.~Bouajjani and O.~Maler, editors, {\em CAV}, volume 5643 of {\em
  Lecture Notes in Computer Science}, pages 627--633. Springer, 2009.

\bibitem[GSA{\etalchar{+}}12]{DBLP:journals/jsc/GawlitzaSAGG12}
T.~Gawlitza, H.~Seidl, A.~Adj{\'e}, S.~Gaubert, and E.~Goubault.
\newblock Abstract interpretation meets convex optimization.
\newblock {\em J. Symb. Comput.}, 47(12):1416--1446, 2012.

\bibitem[IS00]{SurveyKpositivity}
Kh.D. Ikramov and N.V. Savel'eva.
\newblock Conditionally definite matrices.
\newblock {\em Journal of Mathematical Sciences}, 98(1):1--50, 2000.

\bibitem[Joh03]{1205102}
M.~Johansson.
\newblock On modeling, analysis and design of piecewise linear control systems.
\newblock In {\em Circuits and Systems, 2003. ISCAS '03. Proc. of the 2003
  International Symposium on}, volume~3, pages III--646--III--649 vol.3, May
  2003.

\bibitem[MFTM00]{912814}
D.~Mignone, G.~Ferrari-Trecate, and M.~Morari.
\newblock Stability and stabilization of piecewise affine and hybrid systems:
  an lmi approach.
\newblock In {\em Decision and Control, 2000. Proc. of the 39th IEEE Conference
  on}, volume~1, pages 504--509 vol.1, 2000.

\bibitem[Min01]{DBLP:conf/pado/Mine01}
A.~Min{\'{e}}.
\newblock A new numerical abstract domain based on difference-bound matrices.
\newblock In {\em Programs as Data Objects, Second Symposium, {PADO} 2001,
  Aarhus, Denmark, May 21-23, 2001, Proc.}, pages 155--172, 2001.

\bibitem[Min06]{DBLP:journals/lisp/Mine06}
A.~Min{\'e}.
\newblock The octagon abstract domain.
\newblock {\em Higher-Order and Symbolic Computation}, 19(1):31--100, 2006.

\bibitem[MJ81]{Martin1981227}
D.H. Martin and D.H. Jacobson.
\newblock Copositive matrices and definiteness of quadratic forms subject to
  homogeneous linear inequality constraints.
\newblock {\em Linear Algebra and its Applications}, 35(0):227 -- 258, 1981.

\bibitem[Mot51]{Motzkin}
T.~S. Motzkin.
\newblock Two consequences of the transposition theorem on linear inequalities.
\newblock {\em Econometrica}, 19(2):184--185, 1951.

\bibitem[RG13]{DBLP:conf/atva/RouxG13}
P.~Roux and P.-L. Garoche.
\newblock Integrating policy iterations in abstract interpreters.
\newblock In D.~Van~Hung and M.~Ogawa, editors, {\em ATVA}, volume 8172 of {\em
  Lecture Notes in Computer Science}, pages 240--254. Springer, 2013.

\bibitem[RJ00]{847100}
A~Rantzer and M.~Johansson.
\newblock Piecewise linear quadratic optimal control.
\newblock {\em Automatic Control, IEEE Transactions on}, 45(4):629--637, Apr
  2000.

\bibitem[RJGF12]{DBLP:conf/hybrid/RouxJGF12}
P.~Roux, R.~Jobredeaux, P.-L. Garoche, and E.~Feron.
\newblock A generic ellipsoid abstract domain for linear time invariant
  systems.
\newblock In T.~Dang and I.~Mitchell, editors, {\em HSCC}, pages 105--114. ACM,
  2012.

\end{thebibliography}
\end{document}